\documentclass[12pt,reqno]{amsart}

\usepackage{amssymb}

\textheight = 8.25in
\textwidth  = 6.25in
\hoffset    = -0.4in

\newtheorem{lemma}{Lemma}
\newtheorem{theorem}{Theorem}
\newtheorem{corollary}{Corollary}

\theoremstyle{remark}
\newtheorem*{remark}{Remark}

\title[An improved upper bound for the size of the multiplicative 3-Sidon sets]%
  {An improved upper bound for the size of the multiplicative 3-Sidon sets}

\author[P\'eter P\'al Pach]{P\'eter P\'al Pach$^\dag$}
\thanks{${}^\dag$ Partially supported by the National Research, Development and Innovation Office NKFIH
  (Grant Nr.~PD115978) and the J\'anos Bolyai Research
  Scholarship of the Hungarian Academy of Sciences. The author has also received funding from the European Research Council (ERC) under the European Union’s Horizon 2020 research and innovation programme (grant agreement No 648509). This publication reflects only its author's view; the European Research Council Executive Agency is not responsible for any use that may be made of the information it contains.}
\email{ppp@cs.bme.hu}
\address{Department of Computer Science and Information Theory, Budapest
  University of Technology and Economics, 1117 Budapest, Magyar tud\'osok
  k\"or\'utja 2., Hungary,  \newline 
  Department of Computer Science and DIMAP, University of Warwick, Coventry CV4 7AL, UK.}


\begin{document}
\baselineskip=16pt

\begin{abstract}
We say that a set is a multiplicative 3-Sidon set if the equation $s_1s_2s_3=t_1t_2t_3$ does not have a solution consisting of distinct elements taken from this set. In this paper we show that the size of a multiplicative 3-Sidon subset of $\{1,2,\dots,n\}$ is at most $\pi(n)+\pi(n/2)+n^{2/3}(\log n )^{2^{1/3}-1/3+o(1)}$, which improves the previously known best bound $\pi(n)+\pi(n/2)+cn^{2/3}\log n/\log\log n$.
\end{abstract}

\maketitle

\section{Introduction}

A set  $A\subseteq\mathbb{N}$ is called a Sidon set, if for every $l$ the equation $x+y=l$ has at most one solution with $x,y \in A$. A multiplicative Sidon set is analogously defined by requiring that the equation $xy=l$ has at most one solution in $A$. To emphasize the difference, throughout the paper the first one will be called an additive Sidon set. There are many results on the maximal size of an additive Sidon set in $\{1,2,\dots,n\}$ and on the infinite case, as well.  Moreover, a natural generalization of additive Sidon sets is also studied,  they are called  $B_h[g]$ sequences: A sequence $A$ of positive integers is called a $B_h[g]$ sequence, if
every integer $n$ has at most $g$ representations $n = a_1 + a_2 + \dots + a_h$ with all
$a_i$ in $A$ and $a_1 \leq a_2 \leq\dots\leq a_h$. Note that an additive Sidon sequence is a $B_2[1]$ sequence. 

In this paper a set $A\subseteq\mathbb{N}$ is going to be called a multiplicative $k$-Sidon set, if the equation $s_1s_2\dots s_k=t_1t_2\dots t_k$ does not have a solution in $A$ consisting of distinct elements. 

In \cite{P_Acta}  the equation $s_1s_2\dots s_k=t_1t_2\dots t_l$ was investigated and it was proved that for $k\ne l$ there is no density-type theorem, which means that a subset of $\{1,2,\dots, n\}$ not containing a ``nontrivial solution'', that is, a solution consisting of distinct elements, can have size $c\cdot n$. However, a Ramsey-type theorem can be proved: If we colour the integers by $r$ colours, then the equation $a_1a_2\dots a_k=b_1b_2\dots b_l$ has a nontrivial monochromatic solution. The case when $k=l$ is even more interesting, in this paper this is going to be investigated.

Let $G_k(n)$ denote the maximal size of a multiplicative $k$-Sidon set in $\{1,2,\dots,n\}$. Erd\H{o}s studied the case $k=2$.  In \cite{Erdos38} he gave a construction with  $\pi(n)+c_1n^{3/4}/(\log{n})^{3/2}$ elements, and proved that the maximal size of such a set is at most $\pi(n)+c_2n^{3/4}$.  31 years later  Erd\H{o}s \cite{Erdos69} himself improved this upper bound to $\pi(n)+c_2n^{3/4}/(\log n)^{3/2}$. Hence, in the lower- and upper bounds for $G_2(n)$ not only the main terms are the same, but the error terms only differ in a constant factor. In \cite{pach} it was shown that 
\begin{equation}\label{regi3ra}
\pi(n)+\pi(n/2)+c_1n^{2/3}/(\log n)^{4/3} \leq G_3(n)\leq \pi(n)+\pi(n/2)+c_2n^{2/3}\frac{\log n}{\log\log n}.
\end{equation} 
In this paper our aim is to improve the upper bound by showing that 
\begin{theorem}\label{3raujtetel} 
\begin{equation}\label{ujG3}
G_3(n)\leq \pi(n)+\pi(n/2)+n^{2/3}(\log n)^{2^{1/3}-1/3+o(1)}.
\end{equation}
\end{theorem}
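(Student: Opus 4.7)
The plan is to write $A = A_1 \cup A_2 \cup A_3$, where $A_1$ consists of the primes in $A$, $A_2$ of the elements of the form $2p$ with $p$ an odd prime, and $A_3$ is the remaining (``exceptional'') part. Immediately $|A_1|\leq\pi(n)$ and $|A_2|\leq\pi(n/2)$, so the whole problem reduces to proving $|A_3|\leq n^{2/3}(\log n)^{2^{1/3}-1/3+o(1)}$.

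For every $a\in A_3$ I would write $a=p(a)\cdot m(a)$, where $p(a)=P(a)$ is the largest prime factor of $a$; since $a$ is neither a prime nor $2p$, one checks that $m(a)\geq 3$ for all but a negligible (at most $O(\log n)$) subset of $A_3$ consisting of powers of $2$. I would partition $A_3$ into the dyadic classes
\[
A_3^{(T)}=\lfr a\in A_3:T<p(a)\leq 2T\rfr
\]
as $T$ runs through powers of $2$. To bound $|A_3^{(T)}|$, I would encode the pairs $(p,m)$ with $pm\in A_3^{(T)}$ as edges of a bipartite multigraph $G_T$ between primes in $(T,2T]$ and integers $m\leq n/T$, so that $|E(G_T)|=|A_3^{(T)}|$. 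After a case analysis based on $p$-adic valuations of the two sides of $s_1s_2s_3=t_1t_2t_3$ (using, for $T>\sqrt n$, that $p^2>n$ forces $v_p(a)\leq 1$ on every element), the multiplicative 3-Sidon condition forbids certain labelled six-edge configurations in $G_T$, essentially a $C_6$ together with a small list of degenerate variants. A K\H{o}v\'ari--S\'os--Tur\'an / Bondy--Simonovits type extremal estimate, applied to this class of forbidden subgraphs, should then give a bound of the shape $|A_3^{(T)}|\ll (n/T)^{a}\pi(T)^{b}$ for appropriate exponents $a,b$.

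To finish, I would sum the dyadic estimates and split the sum at a threshold $T_0=T_0(n)$: above $T_0$ apply the graph-theoretic bound, below $T_0$ use a direct smooth-number count or iterate the decomposition on the second largest prime factor of $a$. The resulting expression is a two-parameter optimization in $T_0$ (and, if the iteration is needed, in the number of iteration levels), whose balance point is what yields the exponent $2^{1/3}-1/3$; the cubic nature of this exponent reflects the cubic shape of the defining equation $s_1s_2s_3=t_1t_2t_3$. The main obstacle I foresee is the extremal graph-theoretic step: $G_T$ is not literally $C_6$-free, because the 3-Sidon condition also forbids certain degenerate cycle configurations and must interact correctly with the multiplicative structure on the right vertices $m$ (so that integer coincidences among the $m$-values do not spoil the cycle count). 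Tightening this bound enough to beat---and not merely match---the previous $\log n/\log\log n$ factor is exactly what should produce the delicate exponent $2^{1/3}-1/3+o(1)$.
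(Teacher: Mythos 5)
Your overall framework---encode each element as an edge of a hexagon-free (bipartite) graph, bound the edges by extremal results for $C_6$-free graphs, and sum over a dyadic-type decomposition---is indeed the skeleton of the paper's proof. But there are two genuine gaps. First, the opening reduction is false: removing the primes and the numbers $2p$ does \emph{not} reduce the problem to showing $|A_3|\leq n^{2/3}(\log n)^{2^{1/3}-1/3+o(1)}$. The set $\{3p : p\leq n/3 \text{ prime}\}$ is a multiplicative $3$-Sidon set (equal products force equal multisets of primes), contains no primes and no numbers $2p$, and has $\pi(n/3)\gg n^{2/3+o(1)}$ elements. The main term $\pi(n)+\pi(n/2)$ cannot be peeled off combinatorially at the start; in the paper it emerges from the graph-theoretic treatment of the edges incident to large primes, via Gy\H{o}ri's bound $ex(u,v,C_6)<2u+v^2/2$ applied to the bipartite graph between primes in $(n^{2/3},n/2]$ and integers below $n^{1/3}$ (each large prime contributes ``at most $2$ edges on average,'' and primes in $(n/2,n]$ at most one edge each), yielding $\pi(n)+\pi(n/2)+n^{2/3}/2$ for that part.

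Second, and more importantly, your proposal contains no mechanism that could produce the exponent $2^{1/3}-1/3$; attributing it to a threshold optimization in $T_0$ and to ``the cubic shape of the equation'' is not an argument. A decomposition by the largest prime factor plus $C_6$-free bounds is essentially the older approach and at best recovers the previous error term $n^{2/3}\log n/\log\log n$ (and for small $T$ the additive term $16(u+v)\approx 16n/T$ in the F\"uredi--Naor--Verstra\"ete bound is far larger than $n^{2/3}$, so that range needs the iteration you only gesture at). The paper's new ingredient is a factorization lemma writing each $a=uv$ with either $u>n^{2/3}$ prime, or $u,v\leq n^{2/3}$ \emph{and} $\Omega(v)\geq 2\Omega(u)-2$; the edges are then further partitioned according to the exact values $(\Omega(u),\Omega(v))=(k,l)$, the vertex classes are bounded by Hardy--Ramanujan/Erd\H{o}s--S\'ark\"ozy estimates of the shape $\frac{x}{\log x}\cdot\frac{(\log\log x)^{k-1}}{(k-1)!}$, and the exponent arises from maximizing $\frac{(\log\log n)^{k+l-2}}{(k-1)!\,(l-1)!}$ subject to $l\geq 2k-2$, whose maximum at $k=2^{-2/3}\log\log n+O(1)$ gives $(\log n)^{3\cdot 2^{-2/3}}$ and, after the $2/3$-power from $ex(u,v,C_6)\leq 2^{1/3}(uv)^{2/3}+16(u+v)$ and the summation over the $\approx\log n$ windows, exactly $(\log n)^{2^{1/3}-1/3}$. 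None of this number-theoretic input appears in your plan. (A minor point in your favor: your worry that the graph is not literally $C_6$-free is unfounded---with a canonical factorization distinct elements give distinct edges, and any hexagon $x_1x_2\dots x_6x_1$ yields six distinct elements with $\left(x_1x_2\right)\left(x_3x_4\right)\left(x_5x_6\right)=\left(x_2x_3\right)\left(x_4x_5\right)\left(x_6x_1\right)$, so plain $C_6$-freeness is all that is needed.)
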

\noindent
Note that $2^{1/3}-1/3\approx 0.9266$.
 
Our question about the solvability of $a_1a_2\dots a_k=b_1b_2\dots b_k$ is not only a natural generalization of the multiplicative Sidon sequences, but it is also strongly connected to the following problem from combinatorial number theory: Erd\H{o}s, S\'ark\"ozy and T. S\'os \cite{ErdSarSos} examined how many elements of the set
$\{1,2,\dots,n\}$ can be chosen in such a way that none of the $2k$-element products from this set is a perfect square. The maximal size of such a subset is denoted by $F_{2k}(n)$. Note that the functions $F$ and  $G$ satisfy the inequality $F_{2k}(n)\leq G_k(n)$ for every $n$ and $k$ because if the equation $a_1\dots a_k=b_1\dots b_k$ has a solution of distinct elements, then the product of these $2k$ numbers is a perfect square. Erd\H{o}s, S\'ark\"ozy and T. S\'os proved the following estimates for $k=3$: 
\begin{equation*}
\pi(n)+\pi\left({n}/{2}\right)+c_1\frac{n^{2/3}}{(\log n)^{4/3}} \leq F_6(n)
\leq\pi(n)+\pi\left( {n}/{2}\right)+c_2n^{7/9}\log n.
\end{equation*}
Besides, they noted that by improving their graph theoretic lemma used in the proof the upper bound $\pi(n)+\pi(n/2)+cn^{2/3}\log n$ could be obtained, so the lower and upper bounds would only differ in a log-power factor in the error term. Later Gy\H{o}ri \cite{gyori} improved this graph theoretic lemma and gained the desired bound.

In \cite{pach} the error term of the upper bound for $F_6(n)$ was improved by a $(\log\log n)$-factor as a consequence of \eqref{regi3ra}. Now, in this paper this error term is going to be further improved by a $(\log n)$-power factor, namely, \eqref{ujG3} implies that:
\begin{corollary}
$$F_6(n)\leq \pi(n)+\pi(n/2)+n^{2/3}(\log n)^{2^{1/3}-1/3+o(1)}.$$
\end{corollary}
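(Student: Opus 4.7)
The corollary is immediate from Theorem~\ref{3raujtetel}. The inequality $F_{2k}(n) \le G_k(n)$ noted in the introduction holds because any nontrivial relation $a_1 a_2 \cdots a_k = b_1 b_2 \cdots b_k$ on $2k$ distinct elements $a_1,\ldots,a_k,b_1,\ldots,b_k \in A$ forces $a_1 \cdots a_k \cdot b_1 \cdots b_k = (a_1 \cdots a_k)^2$ to be a perfect square, so a set avoiding $2k$-element square products is automatically a multiplicative $k$-Sidon set. Specializing to $k = 3$ and invoking Theorem~\ref{3raujtetel} immediately yields
\begin{equation*}
F_6(n) \le G_3(n) \le \pi(n) + \pi(n/2) + n^{2/3}(\log n)^{2^{1/3}-1/3+o(1)},
\end{equation*}
which is the claim. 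The rest of this proof sketch therefore concerns the underlying Theorem~\ref{3raujtetel}, where all of the work sits.

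For the theorem, I would follow the template of \cite{pach}. Fix a threshold $T = T(n)$ and write every $a \in A$ as $a = uP$ with $u$ supported on primes $\le T$ and $P$ supported on primes $> T$. Elements with $P = 1$ are $T$-smooth and contribute $n^{o(1)}$ for any reasonable $T$, while elements with $P$ a single prime $p$ are absorbed into the main term $\pi(n) + \pi(n/2)$ up to a negligible correction from $u \in \{1, 2\}$. It therefore suffices to bound the count of elements $a = uP \in A$ whose large part $P$ is a product of at least two primes exceeding $T$.

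For each fixed small part $u$, set $A_u = \{P : uP \in A\} \subseteq [1, n/u]$. By unique factorization, a multiplicative 3-Sidon relation in $A$ projects onto a small-part relation $u_1 u_2 u_3 = u_4 u_5 u_6$ together with a matching large-part relation $P_1 P_2 P_3 = P_4 P_5 P_6$, so the family $\{A_u\}$ inherits a constraint of the same shape. Representing those $P \in A_u$ with exactly two large prime factors as edges of a graph on the large primes (and those with three as hyperedges), the 3-Sidon condition forbids a specific configuration, and a K\H{o}v\'ari--S\'os--Tur\'an-style extremal bound applies. Summing over $u$ via a dyadic partition, and choosing $T$ to balance the smooth-number error against this extremal sum, yields the $n^{2/3}$-order error.

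The improvement over the previous $c n^{2/3} \log n / \log \log n$ bound comes from a more flexible extremal step: instead of insisting on a unique $B_3[1]$-type product representation, one allows a multiplicity $g = (\log n)^{\gamma}$, losing a $g^{1/3}$-type factor in the extremal bound but saving a corresponding factor in the aggregation over $u$. The exponent $\gamma$ is then fixed by a one-parameter optimization, which is where $2^{1/3} - 1/3$ should come out. The main obstacle is the extremal lemma itself with the correct $g$-dependence; this is what must be pushed beyond the Gy\H{o}ri-style bound used in \cite{pach}, and it presumably needs an iterative incidence/supersaturation argument rather than a black-box estimate. Once that lemma is in hand, threading the optimization through while keeping all error terms within the target is delicate but essentially mechanical.
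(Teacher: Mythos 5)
Your deduction of the corollary itself is correct and is exactly the paper's argument: the observation that a nontrivial solution of $a_1a_2a_3=b_1b_2b_3$ in distinct elements makes the product of the six elements a perfect square gives $F_6(n)\leq G_3(n)$, and Theorem~\ref{3raujtetel} finishes the job. Since the theorem is established in the paper, nothing more is required, and your first paragraph would stand as a complete proof.

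The appended sketch of Theorem~\ref{3raujtetel}, however, does not reflect how the paper actually proves it, and as written it contains an admitted gap (you yourself flag the needed extremal lemma with the ``correct $g$-dependence'' as an open obstacle). The paper does not use a smooth/rough splitting $a=uP$ at a threshold $T$, nor a $B_3[1]$-with-multiplicity extremal statement, nor any new supersaturation argument. Instead it factors each $a_i=u_iv_i$ via Lemma~\ref{ujuvlemma} so that either $u_i$ is a prime exceeding $n^{2/3}$ or both parts are at most $n^{2/3}$ with the key arithmetic constraint $2\Omega(u_i)-2\leq\Omega(v_i)$; it then builds a single $C_6$-free graph on the integers up to $n^{2/3}$ together with the large primes, slices the edges into $O(\log n)$ ranges by the size of $\max(u_i,v_i)$ and further by the values of $\Omega(u)$ and $\Omega(v)$, and applies the known bipartite hexagon bound $ex(u,v,C_6)\leq 2^{1/3}(uv)^{2/3}+16(u+v)$ of F\"uredi--Naor--Verstra\"ete together with Erd\H{o}s--S\'ark\"ozy counts of integers with prescribed $\Omega$. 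The exponent $2^{1/3}-1/3$ emerges from maximizing $\frac{(\log\log n)^{k-1}}{(k-1)!}\cdot\frac{(\log\log n)^{l-1}}{(l-1)!}$ subject to $l=2k-2$, which is attained near $k=2^{-2/3}\log\log n$ --- not from a one-parameter multiplicity optimization. So if your intent was to reprove the theorem rather than cite it, the route you describe is both different from and substantially less complete than the paper's.
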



\section{Preliminary lemmata}

Throughout the paper the maximal number of edges of a graph not containing a cycle of length $k$ is conventionally denoted by $ex(n,C_k)$, and let us use the notation $ex(u,v,C_{2k})$ for the maximal number of edges of a $C_{2k}$-free bipartite graph, where the number of vertices of the two classes are $u$ and $v$. (Note that every graph appearing in this paper is simple.)

Throughout the paper the number of prime factors of $n$ with multiplicity is going to be denoted by $\Omega(n)$.

\begin{lemma}\label{ujuvlemma}
Let $n$ be positive integer. Every $m\leq n$ can be written in the form
$$m=uv\ (u,v\in\mathbb{N}),$$
where one of the following conditions hold:
\begin{itemize}
\item[(i)] $n^{2/3}<u$ is a prime;
\item[(ii)] $u,v\leq n^{2/3}$ such that $2\Omega(u)-2\leq \Omega(v)$.  
\end{itemize}

\end{lemma}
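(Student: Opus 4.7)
The plan is to split $m$ according to the size of its largest prime factor. If some prime $p\mid m$ exceeds $n^{2/3}$, then $p$ is unique (two such primes would give $m\geq p^2>n^{4/3}>n$), so $u=p$, $v=m/p$ satisfies (i). Otherwise every prime factor of $m$ is $\leq n^{2/3}$; if moreover $m\leq n^{2/3}$, then $u=1$, $v=m$ trivially satisfies (ii), since $2\Omega(u)-2=-2\leq \Omega(v)$.

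The substantive case is $m>n^{2/3}$ with all prime factors $\leq n^{2/3}$. Write the prime factorisation of $m$ as $p_1\geq p_2\geq\cdots\geq p_k$ with multiplicity, where $k=\Omega(m)\geq 2$, and set $s_j=p_1\cdots p_j$. If $p_1\geq m/n^{2/3}$, I would take $u=p_1$, $v=m/p_1$; then both are $\leq n^{2/3}$ and $\Omega(u)=1\leq (k+2)/3$, so (ii) holds. Otherwise I would let $j^*$ be the least index with $s_{j^*}\geq m/n^{2/3}$ and set $u=s_{j^*}$, $v=m/s_{j^*}$. The inequality $m\leq n$ gives $p_1<m/n^{2/3}\leq n^{4/3}/m$, from which
\[
 s_{j^*}=s_{j^*-1}\cdot p_{j^*}<\frac{m}{n^{2/3}}\cdot p_1<\frac{m}{n^{2/3}}\cdot\frac{n^{4/3}}{m}=n^{2/3},
\]
and then $v\leq n^{2/3}$ follows from $s_{j^*}\geq m/n^{2/3}$.

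The main obstacle is the counting bound $\Omega(u)=j^*\leq(k+2)/3$, which is equivalent to the $2\Omega(u)-2\leq\Omega(v)$ appearing in (ii). For this I would write $j=j^*-1$ and use the minimality $s_j<m/n^{2/3}$: since $p_j$ is the smallest of $p_1,\ldots,p_j$ we have $p_j\leq s_j^{1/j}<(m/n^{2/3})^{1/j}$; combined with $p_{j+1},\ldots,p_k\leq p_j$ and $p_{j+1}\cdots p_k=m/s_j>n^{2/3}$, this yields
\[
 n^{2/3}<p_j^{k-j}<\left(\frac{m}{n^{2/3}}\right)^{(k-j)/j}.
\]
Taking logarithms and using $\log m\leq \log n$ simplifies (after clearing denominators) to $j<k/3$; since $j^*=j+1$ is an integer, this gives $j^*\leq\lceil k/3\rceil=\lfloor (k+2)/3\rfloor\leq (k+2)/3$, which is precisely what is needed.
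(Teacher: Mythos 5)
Your proof is correct and follows essentially the same strategy as the paper: peel off the case of a prime factor exceeding $n^{2/3}$, then sort the remaining prime factors in decreasing order and take a greedy prefix product as $u$. The only difference is the cutoff (you stop the prefix at $m/n^{2/3}$, the paper at $m^{1/3}$), which makes your verification of $u,v\leq n^{2/3}$ and of $2\Omega(u)-2\leq\Omega(v)$ slightly more computational but equally valid.
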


\begin{proof}
Let $m=p_1p_2\dots p_r$, where $p_1\geq p_2\geq\dots\geq p_r$ are primes. 
If $p_1>n^{2/3}$, then $u=p_1$ and $v=m/u$ is an appropriate choice. From now on, let us assume that $p_1\leq n^{2/3}$.
Let $i$ be the smallest index such that $u=p_1p_2\dots p_i\geq m^{1/3}$. It is clear that $v:=m/u\leq m^{2/3}\leq n^{2/3}$, we show that $u\leq n^{2/3}$ also holds. Otherwise, $p_1p_2\dots p_{i-1}<m^{1/3}$ and $p_1p_2\dots p_i>m^{2/3}$ together imply that $p_i>m^{1/3}$, hence $i\leq 2$. If $i=1$, then $u=p_1\leq n^{2/3}$. Finally, if $i=2$, then $p_1<m^{1/3}$ yields $u=p_1p_2<m^{2/3}\leq n^{2/3}$.

As $p_1p_2\dots p_{i-1}<m^{1/3}$, we have $3(i-1)< r$, therefore, $\Omega(v)\geq 2\Omega(u)-2$.

\end{proof}

\begin{lemma}\label{exC6}
Let $n\in\mathbb{N}$. Then
$$ex(n, C_6) <  n^\frac{4}{3},$$
if $n$  is large enough.
\end{lemma}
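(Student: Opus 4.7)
The plan is to prove this via a Bondy--Simonovits-style double-counting of paths of length $3$. Suppose, for contradiction, that $G$ is a $C_6$-free graph on $n$ vertices with $|E(G)|\geq n^{4/3}$. First, I would pass to a subgraph $G'\seq G$ of large minimum degree by taking a subgraph that maximizes the edge-to-vertex ratio; this yields a $C_6$-free subgraph in which every vertex has degree at least $|E(G)|/n\geq n^{1/3}$. Restricting further to a bipartite subgraph (losing at most a constant factor of edges) lets me work in the bipartite setting.

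Then I would count paths of length $3$ in $G'$, using the crucial structural observation that in a $C_6$-free graph any two paths of length $3$ between the same pair of endpoints $u,v$ must share an internal vertex: otherwise their union forms a $C_6$. A short case analysis shows that in the bipartite case all paths of length $3$ between $u$ and $v$ share either a common first internal vertex $a$ or a common second internal vertex $b$, so the number of such paths equals $|N(a)\cap N(v)|$ or $|N(u)\cap N(b)|$; equivalently, the bipartite graph between $N(u)$ and $N(v)$ has matching number at most $1$ and is therefore a star.

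The main obstacle is converting this local star structure into a global upper bound whose leading constant is strictly less than $1$, as required. The identity $\sum_{u,v}f_3(u,v)=\sum_{ab\in E}d(a)d(b)$ combined with the naive codegree estimate $\sum_{\{u,v\}}|N(u)\cap N(v)|=\sum_w\binom{d(w)}{2}$ recovers only trivial inequalities. To beat the constant $1$, I would follow the refinement of Bondy--Simonovits standard in the $C_6$-extremal literature (e.g., Pikhurko's and F\"uredi--Naor--Verstra\"ete's arguments): exploit the matching-number-$1$ (star) structure on $N(u)\cup N(v)$ together with a weighted Cauchy--Schwarz over the BFS layers around a fixed vertex. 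This yields $ex(n,C_6)\leq (\tfrac{1}{2}\cdot 2^{1/3}+o(1))\cdot n^{4/3}$, and since $\tfrac{1}{2}\cdot 2^{1/3}\approx 0.63<1$, the bound $ex(n,C_6)<n^{4/3}$ holds for all $n$ large enough.
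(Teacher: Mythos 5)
Your proposal is essentially the paper's proof: the paper disposes of this lemma by citing Theorem~1.1 of F\"uredi--Naor--Verstra\"ete, which gives the stronger bound $ex(n,C_6)<0.6272\,n^{4/3}$ for large $n$, and your sketch (minimum-degree reduction, counting paths of length $3$, the star structure of the paths between $N(u)$ and $N(v)$) is an outline of how that theorem is proved, with the decisive quantitative step --- pushing the leading constant below $1$ --- likewise deferred to that same source. You correctly note that plain Bondy--Simonovits or even Pikhurko's refinement would not suffice here, so the reliance on the F\"uredi--Naor--Verstra\"ete argument is exactly what the paper does.
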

\begin{proof}
According to the second statement of Theorem 1.1 in \cite{furedi} the stronger inequality $ex(n, C_6) < 0.6272 n^\frac{4}{3}$ also holds. 
\end{proof}

\begin{lemma}\label{exC6uv}
Let $u, v \in\mathbb{N}$. Then
$$ex(u, v, C_6) \leq 2^{1/3}(uv)^{2/3}+16(u+v).$$
\end{lemma}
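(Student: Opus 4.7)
The plan is to combine a min-degree reduction (absorbing the additive $16(u+v)$ term) with a path-counting argument (yielding the main term $2^{1/3}(uv)^{2/3}$).

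For the reduction, I would iteratively delete from $G$ any vertex of degree at most $15$, producing a subgraph $G'$ with minimum degree at least $16$. Since at most $u+v$ vertices are removed and each removal discards at most $15$ edges, this costs fewer than $16(u+v)$ edges in total. The subgraph $G'$ remains $C_6$-free, has parts of sizes $u'\leq u$ and $v'\leq v$, and it therefore suffices to prove $e(G') \leq 2^{1/3}(u'v')^{2/3}$.

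The central structural fact I would exploit is: in a $C_6$-free bipartite graph, for any pair $(v_0,u_0) \in V\times U$, two length-three paths $v_0 a_1 b_1 u_0$ and $v_0 a_2 b_2 u_0$ (with $a_i\in U$, $b_i\in V$) must satisfy $a_1=a_2$ or $b_1=b_2$, since otherwise the six vertices $v_0,a_1,b_1,u_0,b_2,a_2$ form a $C_6$. Consequently, for each pair $(v_0,u_0)$ one of three alternatives holds: (a) all length-$3$ paths between them share the first intermediate vertex $a\in N(v_0)$, or (b) all share the second intermediate vertex $b\in N(u_0)$, or (c) there are at most two such paths.

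With this in hand, I would double-count the number $P$ of ordered paths of length three in $G'$. Summing over the middle edge gives
\[
P \;=\; \sum_{uv \in E(G')} (d(u)-1)(d(v)-1),
\]
and combining the minimum-degree bound $d\geq 16$ with Cauchy--Schwarz applied to the degree sequences produces a lower bound of order $e(G')^3/(u'v')$ (up to factors arising from the $d-1$ correction). Summing instead over endpoint pairs $(v_0,u_0)$ and invoking the book structure yields an upper bound $P \leq 2u'v' + B$, where $B$ counts the paths coming from books of size at least three and can in turn be estimated by the number of cherries of $G'$, i.e.\ by expressions in $\sum d(u)^2$ and $\sum d(v)^2$. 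Comparing the two estimates and optimizing gives the inequality $e(G')^3 \leq 2(u'v')^2$, that is, $e(G') \leq 2^{1/3}(u'v')^{2/3}$; adding back the edges removed in the cleanup step then yields the claimed bound.

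The main obstacle is extracting the sharp constant $2^{1/3}$. A coarser version of the argument readily gives a bound of the form $e(G') \leq C(uv)^{2/3}$ for some constant $C$, but pinning down $C=2^{1/3}$ requires a delicate balancing of the two book types (a) and (b), essentially an AM--GM optimization in three factors, which is the source of the cube root.
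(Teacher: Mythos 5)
The paper does not prove this lemma at all: it is quoted verbatim as Theorem~1.2 of F\"uredi--Naor--Verstra\"ete \cite{furedi}, so you are attempting something far more ambitious than the paper does, namely a self-contained proof of a sharp extremal result. Your outline does follow the general spirit of the FNV argument (a minimum-degree cleanup absorbing the linear term, then a double count of paths of length three using the fact that $C_6$-freeness forces all $3$-paths between a fixed pair of endpoints to form a ``book'' through a common internal vertex). The cleanup step and the structural observation about books are both correct as stated.

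However, as a proof the proposal has genuine gaps at exactly the points that carry the content of the lemma. First, the lower bound $P\gtrsim e(G')^3/(u'v')$ is asserted ``up to factors arising from the $d-1$ correction''; this is not a routine Cauchy--Schwarz application (it is a Blakley--Roy/Sidorenko-type inequality for $3$-walks, and converting walks to paths while keeping constants sharp is precisely where care is needed). Second, the upper bound $P\le 2u'v'+B$ with $B$ ``estimated by the number of cherries'' is not justified: a $C_6$-free bipartite graph may contain many $C_4$'s (e.g.\ $K_{2,n}$), so the books can be as large as the degrees, and $\sum d(u)^2$, $\sum d(v)^2$ admit no useful a priori upper bound in terms of $e$, $u'$, $v'$ alone; showing that $B$ is negligible against $e^3/(u'v')$ is a substantive part of the FNV proof, not a footnote. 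Third, and most importantly, you explicitly concede that your argument only yields $e(G')\le C(u'v')^{2/3}$ for some constant and that extracting $C=2^{1/3}$ is an unresolved ``obstacle''; but the constant is the entire assertion of the lemma beyond a generic $O((uv)^{2/3})$ bound, so the statement as given is not proved. (For the paper's application only the order of magnitude of this lemma is used, but that does not make the stated inequality follow from your sketch.) If you want a complete argument, you should either carry out the optimization over the two book types quantitatively or simply cite \cite{furedi} as the paper does.
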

\begin{proof}
This is Theorem 1.2 in \cite{furedi}. 
\end{proof}

\begin{lemma}\label{gy1} Let $u,v\in\mathbb{N}$ satisfying $v\leq u$. Then
$$ex(u,v,C_6) < 2u + v^{2}/2.$$
\end{lemma}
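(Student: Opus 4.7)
My plan is to prove $ex(u,v,C_6) < 2u + v^2/2$ (for $v \le u$) by induction on $u+v$, using two simple degree-based reductions followed by a contradiction argument in the remaining case.

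If some $x \in U$ has $d(x) \le 2$, I would delete it: the reduced bipartite $C_6$-free graph has parts of sizes $(u-1, v)$, and when $v \le u-1$ the inductive hypothesis gives $e(G) \le e(G-x) + 2 < 2(u-1) + v^2/2 + 2 = 2u + v^2/2$. In the boundary subcase $v = u$ I would apply the bound with the roles of $U$ and $V$ interchanged (the extremal function $ex(\cdot,\cdot,C_6)$ is symmetric in its arguments), also checking the small cases $u = v \le 2$ directly. Alternatively, if some $y \in V$ has $d(y) \le v-1$, I would delete it; induction on the reduced graph with parts $(u, v-1)$ gives
\[
e(G) \le e(G-y) + (v-1) < 2u + (v-1)^2/2 + (v-1) = 2u + v^2/2 - \tfrac12,
\]
even stronger than required.

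Once neither reduction applies, every $x \in U$ has $d(x) \ge 3$ and every $y \in V$ has $d(y) \ge v$. In this \emph{irreducible case} my goal is to exhibit a $C_6$ in $G$, which contradicts the hypothesis and closes the induction. The key observation is that a $C_6$ in a bipartite graph corresponds to a triple $y_1, y_2, y_3 \in V$ whose three pairwise common-neighborhoods $N(y_i) \cap N(y_j) \subseteq U$ admit a system of distinct representatives --- the representatives together with $y_1, y_2, y_3$ form the cycle. By Hall's theorem, an SDR for three nonempty sets fails only when all three sets lie inside a common two-element subset of $U$. The minimum-degree conditions together with inclusion-exclusion give $|N(y_i) \cap N(y_j)| \ge 2v - u$, and the $K_{3,3}$-free property (implied by $C_6$-freeness) bounds the triple intersection by $2$; in particular, when $v$ is not too small (say $v \ge (u+3)/2$) every pairwise common-neighborhood already has at least three elements, the SDR is automatic, and the $C_6$ pops out.

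The main obstacle is the intermediate regime where $v$ is much smaller than $u$ (say, comparable to $\sqrt{u}$), so that $2v - u$ is tiny or even negative and pairwise common-neighborhoods can genuinely have size as small as $1$ or $0$. To handle this regime I would pick $\{y_1, y_2\}$ maximizing $|N(y_1) \cap N(y_2)|$, exploit $d(x) \ge 3$ at some common neighbor $x$ to locate $z \in N(x) \setminus \{y_1, y_2\}$, and analyze the triple $\{y_1, y_2, z\}$ via the $K_{3,3}$-free condition together with an averaging argument over common neighbors; iterating if necessary, some such triple should yield the SDR and hence the $C_6$. Carrying out this extraction cleanly under only the two minimum-degree constraints is the most delicate step of the argument.
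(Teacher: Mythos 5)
The paper does not actually prove this lemma---it is quoted verbatim as Theorem~1 of Gy\H{o}ri's paper \cite{gyori}---so any self-contained argument is necessarily a different route. Your inductive framework is fine: the two degree reductions (deleting $x\in U$ with $d(x)\le 2$, or $y\in V$ with $d(y)\le v-1$), the role-swap in the boundary case $u=v$, and the small base cases all check out arithmetically. The problem is that after these trivial reductions the \emph{entire} content of the lemma sits in the irreducible case $\delta(U)\ge 3$, $\delta(V)\ge v$, and there you only have a proof when $v\ge(u+3)/2$ (where inclusion--exclusion makes every pairwise codegree at least $3$). For smaller $v$ you offer a plan---``pick a maximizing pair, iterate, some such triple should yield the SDR''---which is a hope, not an argument. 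Note that this is exactly the regime where the lemma has content: for $v^2/4\lesssim u\lesssim v^2/2$ neither of the crude edge counts $e\ge 3u$, $e\ge v^2$ exceeds the target $2u+v^2/2$, so nothing short of actually producing a $C_6$ (or an equivalent structural contradiction) will do. A secondary but relevant slip: your characterization of Hall failure for three nonempty sets omits the case where two of the sets are the same singleton while the third is large, and this is precisely the failure mode that governs the hard regime.

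The good news is that the irreducible case does lead to a contradiction, but it needs a genuine structural argument rather than iteration. One way: fix $y_1\in V$ and note $\sum_{y'\ne y_1}c(y_1,y')=\sum_{x\in N(y_1)}(d(x)-1)\ge 2d(y_1)\ge 2v$, where $c$ denotes codegree. If $c(y_1,y_2)\ge 3$ with common neighbourhood $S$, then for any $x\in S$ and any $y_3\in N(x)\setminus\{y_1,y_2\}$ the triple $y_1,y_2,y_3$ forces (via the omitted Hall case) $N(y_3)\cap N(y_1)=N(y_3)\cap N(y_2)=\{x\}$. Hence each $x\in S$ contributes $d(x)-2\ge 1$ vertices of codegree exactly $1$ with $y_1$, these are pairwise distinct across all such $x$ and across all choices of $y_2$, and every remaining vertex has codegree at most $2$ with $y_1$. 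Totalling these contributions bounds the sum by $2(v-1)$, contradicting the lower bound $2v$. Without some argument of this kind (or Gy\H{o}ri's original charging argument, which assigns $d(x)-2$ distinct pairs of $V$ to each $x\in U$ of degree at least $3$), your proposal is incomplete at its decisive step.
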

\begin{proof}
This is Theorem 1. in \cite{gyori}. 
\end{proof}



\begin{lemma}\label{Omegabecs}
Let us denote by $N_i(x)$ the number of positive integers
$n \leq x$ satisfying $\Omega(n) \leq i$ and by $M_i(x)$ the number of positive integers
$n \leq x$ satisfying $\Omega(n) \geq i$.

For every $\delta > 0$ there exists some
constant $C = C(\delta)$ such that for $1 \leq i \leq (1 - \delta) \log \log x$ we have
$N_i(x) < C(\delta) \cdot \frac{x}{\log x}\cdot \frac{(\log\log x)^{i-1}}{(i-1)!}$
and for $(1 + \delta) \log \log x \leq i \leq (2 - \delta) \log \log x$ we have 
$M_i(x) < C(\delta) \cdot \frac{x}{\log x}\cdot \frac{(\log\log x)^{i-1}}{(i-1)!}$
%
\end{lemma}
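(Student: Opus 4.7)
My plan is to deduce this from the classical Hardy--Ramanujan/Sathe--Selberg uniform upper bound
$$\pi_k^*(x) := \#\{n\leq x : \Omega(n)=k\}\; \leq\; C'\,\frac{x}{\log x}\cdot\frac{(\log\log x+c)^{k-1}}{(k-1)!},$$
valid for $1 \le k\le 2\log\log x$ with absolute constants $C'$ and $c$. In the two ranges of $i$ that appear in the lemma, $(\log\log x+c)^{k-1}$ and $(\log\log x)^{k-1}$ differ only by a factor depending on $\delta$, so this estimate already has the shape appearing in the conclusion.

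Set $a_k := (\log\log x)^{k-1}/(k-1)!$. The ratio $a_{k+1}/a_k = (\log\log x)/k$ shows that $(a_k)$ is unimodal with peak at $k\approx \log\log x$. From here both parts reduce to a geometric-series comparison around the peak.

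For $N_i(x)$ with $1\le i\le (1-\delta)\log\log x$, one has $a_{k-1}/a_k = (k-1)/\log\log x \le 1-\delta$ for every $k\le i$, so the partial sum $\sum_{k\le i}a_k$ is a geometric series dominated by a $\delta$-dependent multiple of its largest term $a_i$; this yields the claimed inequality. For $M_i(x)$ with $(1+\delta)\log\log x\le i\le (2-\delta)\log\log x$, the dual comparison $a_{k+1}/a_k\le 1/(1+\delta)$ holds for every $k\ge i$ inside the Sathe--Selberg range, so the corresponding tail starting at $k=i$ is again geometric and dominated by $a_i$ up to a constant in $\delta$. The residual contribution from $k>(2-\delta/2)\log\log x$ is absorbed by the boundary Sathe--Selberg estimate combined with the trivial bound $\Omega(n)\le \log_2 x$ (which caps the number of surviving terms).

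The only non-trivial input is the uniform Sathe--Selberg bound; given that, everything reduces to bookkeeping with the unimodal sequence $(a_k)$. The main obstacle, were one to attempt a self-contained derivation, would be establishing this uniform bound throughout the range $k\le 2\log\log x$; the cleaner route is to quote it, which is what I would do.
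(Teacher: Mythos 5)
Your plan for $N_i$ is sound and is essentially how Lemma 2.8 of~\cite{pach} (which the paper simply cites for this half) goes: in the range $k\le i\le(1-\delta)\log\log x$ the uniform Hardy--Ramanujan upper bound for $\Omega$ is unproblematic, and the ratio $a_{k-1}/a_k=(k-1)/\log\log x\le 1-\delta$ turns $\sum_{k\le i}a_k$ into a geometric series dominated by $a_i/\delta$. Note, though, that the paper offers no proof of either half beyond citation (Lemma 2.8 of~\cite{pach} and Corollary~1 of~\cite{erdsar}), so you are attempting more than it does.

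The $M_i$ half has a genuine gap. First, the uniform bound $\#\{n\le x:\Omega(n)=k\}\le C'\,x(\log x)^{-1}(\log\log x+c)^{k-1}/(k-1)!$ with \emph{absolute} constants is false all the way up to $k=2\log\log x$: in the Sathe--Selberg asymptotic for $\Omega$ the constant $G(z)$ carries the factor $(1-z/2)^{-1}$ coming from the prime $2$, so it blows up as $k/\log\log x\to 2$; the bound holds only for $k\le(2-\delta)\log\log x$ with a $\delta$-dependent constant, which is precisely why the lemma's hypothesis stops there. Second, and more seriously, your disposal of the far tail fails. Write $f(\alpha)=\alpha-\alpha\log\alpha-1$, so that the target bound is of order $x(\log x)^{f(\alpha)}$ with $\alpha=i/\log\log x\le 2-\delta$ (and $f(\alpha)<0$ throughout). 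Bounding each of the at most $\log_2x$ residual values of $k$ by the boundary estimate $\asymp x(\log x)^{f(2-\delta/2)}$ yields $x(\log x)^{1+f(2-\delta/2)}$, and since $f(2-\delta)-f(2-\delta/2)\le(\delta/2)\log 2<1$, this overshoots the target by nearly a full power of $\log x$. Worse, for $k>2\log\log x$ there is no Poisson-type bound to invoke at all: there $\#\{n\le x:\Omega(n)=k\}$ is governed by $x/2^k$ (e.g.\ $n=2^{k-1}p$ already contributes $\gg x/(2^k\log x)$, which exceeds the Poisson term), and the whole set $\{n\le x:\Omega(n)>2\log\log x\}$ has size about $x/(\log x)^{2\log 2-1}$ --- comparable to the target when $\delta$ is small, so it cannot be waved away. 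The standard repair, which is essentially the content of the cited Erd\H{o}s--S\'ark\"ozy result, is the Rankin/Chernoff bound $M_i(x)\le y^{-i}\sum_{n\le x}y^{\Omega(n)}\ll_y y^{-i}x(\log x)^{y-1}$ for fixed $1<y<2$, applied with $y=i/\log\log x\in[1+\delta,2-\delta]$; a Stirling computation then gives the claimed inequality in one stroke, with no summation over $k$ and no separate tail to handle.
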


\begin{proof}
The first statement is Lemma 2.8. in \cite{pach}, the second statement is a direct consequence of Corollary 1. in \cite{erdsar}.
\end{proof}
\begin{remark}
Let $\alpha:=\frac{i-1}{\log\log x}$, then we have 
$$\frac{(\log\log x)^{i-1}}{(i-1)!}=\frac{(\log\log x)^{\alpha \log\log x}}{(\alpha \log\log x)!}\leq \left( \frac{e}{\alpha}\right)^{\alpha\log\log x}=(\log x)^{\alpha-\log\alpha}.$$
Note that when we apply Lemma~\ref{Omegabecs}. it is going to use that $C(\delta) \cdot \frac{x}{\log x}\cdot \frac{(\log\log x)^{i-1}}{(i-1)!}\leq C(\delta)\cdot x(\log x)^{\alpha-\alpha\log\alpha-1}$.
\end{remark}

\section{Proof of Theorem~\ref{3raujtetel}}


Let us assume that for $A\subseteq \{1,2,\dots,n\}$ the equation 
\begin{equation}\label{abcdef2}
s_1s_2s_3=t_1t_2t_3\  (s_1,s_2,s_3,t_1,t_2,t_3\in A)
\end{equation}
has no solution consisting of distinct elements.

Let 
$A=\{a_1,\dots,a_l\},\text{ where }1\leq a_1<a_2<\dots<a_l\leq n.$ Applying Lemma 
  \ref{ujuvlemma}.  we obtain that the elements of the set $A$ can be written in the form
$a_i=u_iv_i,$
where $u_i$ and $v_i$  are positive integers and one of the following conditions holds:
\begin{itemize}
\item[(i)] $n^{2/3}<u_i$ is a prime,
\item[(ii)] $u_i,v_i\leq n^{2/3}$ and $\Omega(v_i)\geq 2\Omega(u_i)-2$.
\end{itemize}
If an element $a_i$ can be written as $u_iv_i$ in more than one appropriate way, then we choose such a representation $a_i=u_iv_i$  where 
$v_i$ is minimal. The number of those elements of  $A$ for which  $u_i=v_i$   can be estimated 
from above by the number of square numbers in  $\{1,2,\dots,n\}$, hence
\begin{equation}
|\{i\ |\ 1\leq i\leq l, u_i=v_i\}|\leq \sqrt{n}.
\end{equation}
As $\sqrt{n}$ is negligible compared to the error term $n^{\frac{2}{3}}(\log n)^{2^{1/3}-1/3+o(1)}$, it suffices to prove the theorem for sets which does not contain squares. From now on, let us assume that $u_i \ne v_i$  for every
$a_i\in A$. 

Assume  that  \eqref{abcdef2} has no such solution
where
$s_1,s_2,s_3,t_1,t_2,t_3$ are  distinct. Let $G=(V,E)$ be a
graph where the vertices are the integers not greater than $n^{2/3}$
and the primes from the interval
$(n^{2/3},n]$:
$$V(G)=\{a\in\mathbb{N}\ |\ a\leq n^{2/3}\}\cup\{p\ |\ n^{2/3}<p\leq n, p\text { is a prime}\}.$$
Then the number of the vertices of $G$ is
$|V(G)|=\pi(n)+[n^{2/3}]-\pi(n^{2/3}).$
The edges of $G$ are defined in such a way that they correspond to the elements of $A$:
For each $1\leq i\leq l$ let $u_iv_i$  be an edge, and denote it by  $a_i$:
$E(G)=\{u_iv_i\ |\ 1\leq i\leq l\}.$
In this way distinct edges are assigned to distinct elements of $A$. In the graph
$G$ there are  no loops because 
we have omitted the elements where $u_i=v_i$, moreover 
$|E(G)|=|A|=l.$
Furthermore, $G$ contains no hexagons. Indeed, if
$x_1x_2x_3x_4x_5x_6x_1$  is a hexagon in $G$, then 
$$s_1=x_1x_2,t_1=x_2x_3,s_2=x_3x_4,t_2=x_4x_5,s_3=x_5x_6,t_3=x_6x_1$$
would be a solution of  \eqref{abcdef2}, contradicting our assumption.

Now our aim is to estimate from above the number of edges of $G$. At first let us partition the edges of  $G$ into some parts.
Let $G_0$ be the subgraph that contains such edges $u_iv_i$ of $G$ for which $\max(u_i,v_i)\leq \sqrt{n}$:
$$E(G_0)=\{u_iv_i\ |\ u_i,v_i\leq \sqrt{n}\}.$$

Those remaining edges that satisfy (ii) are divided into $K=\left\lfloor\frac{\log n}{6}\right\rfloor$ parts. For these 
edges $u_iv_i$ both $\sqrt{n}< \max(u_i,v_i)\leq n^{2/3}$ and $2\Omega(u_i)-2\leq \Omega(v_i)$ hold. For
$1\leq h\leq K$ let $G_{h}$  be the subgraph which contains such edges $u_iv_i$ of  the graph 
 $G\setminus G_0$ which satisfy the inequality 
$n^{\frac{1}{2}+\frac{h-1}{6K}}< \max(u_i,v_i)\leq n^{{\frac{1}{2}}+\frac{h}{6K}}.$
The edges of the graph $G_h$ are partitioned into two classes depending on the sizes of $u_i$ and $v_i$: 
$$E(G'_{h})=\{u_iv_i\ |\ n^{\frac{1}{2}+\frac{h-1}{6K}}< u_i\leq n^{\frac{1}{2}+\frac{h}{6K}}\text { and } 2\Omega(u_i)-2\leq \Omega(v_i)\text\}\setminus E(G_0)$$
and 
$$E(G''_{h})=\{u_iv_i\ |\ n^{\frac{1}{2}+\frac{h-1}{6K}}< v_i\leq n^{\frac{1}{2}+\frac{h}{6K}}\text { and } 2\Omega(u_i)-2\leq \Omega(v_i)\text\}\setminus E(G_0).$$
 
 Finally, let $G_{K+1}$ be the graph which is obtained by deleting the edges of $G_0, G_1, \dots,G_{K}$ from $G$.
For the edges $u_iv_i$ in $G_{K+1}$  we have
 $n^{2/3}< u_i.$ That is, $u_i$ is a prime, and these edges satisfy (i):
$$E(G_{K+1})=\{u_iv_i\ |\ n^{2/3}< u_i\text{ and }u_i\text{ is  prime}\}.$$
So we divided the graph $G$ into $2K+2$ parts.

Denote by $l_h, l_h', l_h''$ the number of edges of $G_h, G_h', G_h''$, respectively ($0\leq h\leq K+1$).
In the remaining part of the proof we estimate the   number of edges $l_h$ separately, and at the end  we add up these estimates. 
 There are at most  $[n^{1/2}]$
vertices of $G_0$ that are endpoints of some edges because
 $u_iv_i\in E(G_0)$  implies $v_i<u_i\leq n^{1/2}$. Hence, by
Lemma~\ref{exC6}. for large enough $n$
\begin{equation}\label{k0}
l_0\leq (n^{1/2})^{4/3}=n^{2/3} 
\end{equation}
holds.

Now let $1\leq h\leq K$. If $a_i=u_iv_i$ is an edge of $G'_{h}$, then
$$n^{\frac{1}{2}+\frac{h-1}{6K}}< u_i\leq n^{\frac{1}{2}+\frac{h}{6K}},$$
and so
$$v_i=\frac{a_i}{u_i}\leq\frac{n}{u_i}\leq n^{\frac{1}{2}-\frac{h-1}{6K}}.  $$
For brevity, let $H=G_h', \alpha={\frac{1}{2}+\frac{h}{6K}}, \beta={\frac{1}{2}-\frac{h-1}{6K}}$. Then for every edge $uv$ in $H$ we have $u\leq n^{\alpha},v\leq n^{\beta}$ and 
\begin{equation}
2\Omega(u)-2\leq \Omega(v).
\end{equation}
Moreover, $\alpha,\beta\in [1/3,2/3]$ and $\alpha+\beta=1+\frac{1}{6K}\approx 1+\frac{1}{\log n}$.
Now we partition the edges of the bipartite graph $H$ into several subgraphs. Let $H_1$ and $H_2$ be the subgraphs containing the edges $uv$ satisfying $\Omega(u)\leq 0.55\log\log n$ and $\Omega(v)\geq 1.6\log\log n$, respectively. For the remaining edges we have $0.55\log\log n<\Omega(u)$ and $\Omega(v)\leq 1.6\log\log n$. For every 
\begin{equation}\label{klrest}
0.55\log\log n\leq k,\quad 2k-2\leq l,\quad l\leq 1.6\log\log n
\end{equation} let $H_{k,l}$ contain the edges $uv$ for which $\Omega(u)=k, \Omega(v)=l$.

Note that the graphs $H_1,H_2,H_{k,l}$ are all $C_6$-free bipartite graphs. Now, we are going to give upper bounds for the number of edges in these graphs. As a first step from all these graphs we delete the vertices with degree 0.

In $H_1$ for the two independent vertex classes we have 
$$U_1\subseteq\{u\ |\ u\leq n^{\alpha}, \Omega(u)\leq 0.55\log\log n\}\text{ and }V_1\subseteq\{v\ |\ v\leq n^{\beta}\}.$$
According to Lemma~\ref{Omegabecs}. we have $|U_1|\leq c_{1} n^{\alpha} (\log n)^{0.55-0.55\log0.55-1}<n^{\alpha}(\log n)^{-0.12}$ for some constant $c_1$ and sufficiently large $n$. Clearly, $|V_1|\leq n^\beta$. Therefore, by Lemma~\ref{exC6uv}. the number of edges of $H_1$ is at most
\begin{multline}\label{H1}
2^{1/3}(|U_1|\cdot|V_1|)^{2/3}+16(|U_1|+|V_1|)\leq 2^{1/3} n^{\frac{2}{3}(\alpha+\beta)}(\log n)^{-0.08}+16(n^\alpha+n^\beta)\leq \\
\leq c_{2} \frac{n^{2/3}}{(\log n)^{0.08}} +16(n^\alpha+n^\beta),
\end{multline}
where $c_2>2^{1/3}e^{2/3}$ is arbitrary and $n$ is large enough.

Similarly, in $H_2$ the two independent vertex classes are 
$$U_2\subseteq\{u\ |\ u\leq n^{\alpha}\},V_2\subseteq\{v\ |\ v\leq n^{\beta}\text{ and } \Omega(v)\geq 1.6\log\log n\}.$$
According to Lemma~\ref{Omegabecs}. we have $|V_2|\leq c_{3} n^{\beta} (\log n)^{1.6-1.6\log1.6-1}<(\log n)^{-0.12}$ and clearly, $|U_2|\leq n^\alpha$. Therefore, by Lemma~\ref{exC6uv}. the number of edges of $H_2$ is at most
\begin{multline}\label{H2}
2^{1/3}(|U_2|\cdot|V_2|)^{2/3}+16(|U_2|+|V_2|)\leq 2^{1/3} n^{\frac{2}{3}(\alpha+\beta)}(\log n)^{-0.08}+16(n^\alpha+n^\beta)\leq \\
c_{2} \frac{n^{2/3}}{(\log n)^{0.08}} +16(n^\alpha+n^\beta), 
\end{multline}
if $n$ is large enough.

Now, let us consider the $H_{k,l}$ graphs. Note that $k$ and $l$ satisfy \eqref{klrest} which implies that $k\leq l/2+1\leq 0.8\log\log n+1\leq 0.81\log\log n$ and $l\geq 2k-2\geq 1.1\log\log n-2\geq 1.09\log\log n$.
For the two vertex classes we have 
$$U_{k,l}\subseteq\{u\  |\ u\leq n^{\alpha}\text{ and } \Omega(u)=k\},V_{k,l}\subseteq\{v\ |\ v\leq n^{\beta}\text{ and } \Omega(v)=l\}.$$
According to Lemma~\ref{Omegabecs}. there is a $c_4>0$ not depending on $k,l,\alpha,\beta$ such that
$$|U_{k,l}|\leq c_4\cdot\frac{n^\alpha}{\log n} \cdot\frac{(\log\log n)^{k-1}}{(k-1)!}$$
and 
$$|V_{k,l}|\leq c_4\cdot \frac{n^\beta}{\log n} \cdot\frac{(\log\log n)^{l-1}}{(l-1)!}.$$
Let $d=\max\limits_{0.55\log\log n\leq k, 2k-2\leq l, l\leq 1.6\log\log n} \frac{(\log\log n)^{k-1}}{(k-1)!}\cdot \frac{(\log\log n)^{l-1}}{(l-1)!}$. Then, for every $k$ and $l$ satisfying \eqref{klrest} we have
$$|U_{k,l}|\cdot |V_{k,l}|\leq c_4^2 \cdot\frac{n^{\alpha+\beta}}{(\log n)^2}\cdot d.$$
The pair $k,l$ for which the maximum $d$ is attained satisfies $k= 2^{-2/3}\log\log n+O(1)$ and $l=2k-2$, furthermore we have $d\leq c_5(\log n)^{3/2^{2/3}}$ with some constant $c_5$.
Therefore, $|U_{k,l}|\cdot |V_{k,l}|\leq c_6 \cdot\frac{n} {(\log n)^{2-3/2^{2/3}}}$ with some $c_6$.
Therefore, by Lemma~\ref{exC6uv}. the number of edges of $H_{k,l}$ is at most
\begin{equation*}
2^{1/3}(|U_{k,l}|\cdot|V_{k,l}|)^{2/3}+16(|U_{k,l}|+|V_{k,l}|)\leq c_{7} \cdot\frac{n^{\frac{2}{3}}}{(\log n)^{4/3-2^{1/3}}}+16(n^\alpha+n^\beta).
\end{equation*}
The number of possible $(k,l)$ pairs is less than $(\log \log n)^2$, so the total number of edges of the $H_{k,l}$ graphs is at most
\begin{equation}\label{Hkl}
\sum |E(H_{k,l}|\leq c_{7}(\log\log n)^2\frac{n^{\frac{2}{3}}}{(\log n)^{4/3-2^{1/3}}}+16(\log\log n)^2(n^\alpha+n^\beta).
\end{equation}
By adding up \eqref{H1}, \eqref{H2} and \eqref{Hkl} we get that the total number of edges of $H=G_h'$ is at most
\begin{multline*}
|E(G_h')|\leq 2c_{2} \frac{n^{2/3}}{(\log n)^{0.08}} +32(n^\alpha+n^\beta)+c_{7}(\log\log n)^2\frac{n^{\frac{2}{3}}}{(\log n)^{4/3-2^{1/3}}}+\\
16(\log\log n)^2(n^{\alpha}+n^{\beta})\leq \\
\leq (c_{7}+1) \frac{n^{\frac{2}{3}}(\log\log n)^2}{(\log n)^{4/3-2^{1/3}}}+17(\log\log n)^2(n^{\frac12+\frac{h}{6K}}+n^{\frac12-\frac{h-1}{6K}}).
\end{multline*}
By summing this estimation for $1\leq h \leq K$ it is obtained that:
\begin{equation}\label{Ghbecs1}
\sum\limits_{1\leq h\leq K}|E(G_h')|\leq  c_{8} n^{\frac{2}{3}}(\log n)^{2^{1/3}-1/3}(\log\log n)^2+c_9n^{2/3}(\log\log n)^2.
\end{equation}
In the same way it can be shown that the right hand side of \eqref{Ghbecs1} is also an upper bound for the total number of edges of the $G_h''$ graphs:
\begin{equation}\label{Ghbecs2}
\sum\limits_{1\leq h\leq K}|E(G_h'')|\leq   c_{8} n^{\frac{2}{3}}(\log n)^{2^{1/3}-1/3}(\log\log n)^2+c_9n^{2/3}(\log\log n)^2.
\end{equation}

Finally, $G_{K+1}$ is also a bipartite graph, the two independent vertex classes are the primes  from the interval $(n^{2/3},n]$ and the positive integers less than $n^{1/3}$. (We delete again the vertices with degree 0.) If $p\in(n/2,n]$, then  the vertex corresponding to $p$ is the endpoint of at most one edge:  The one corresponding to
$p\cdot 1$ because  $2p>n$, so $p$ cannot be connected with an integer bigger than $1$. Delete the edges $1p$  and the vertices $p$  for $n/2<p\leq n$ from the graph 
$G_{K+1}$, and let the remaining graph be $G_{K+1}'$. Note that the number of deleted edges is at most $\pi(n)-\pi(n/2)$.
The graph $G_{K+1}'$ does not contain any hexagons either, and all of its edges join  a prime from  $(n^{2/3},n/2]$ with a positive integer less than $n^{1/3}$.
Therefore, it is a bipartite graph whose independent vertex classes $R$ and $S$ satisfy the following conditions:
$$R\subseteq \{p\ |\ n^{2/3}<p\leq n/2,p\text{ is a prime}\}\text{ and}$$
$$S\subseteq \{a\in\mathbb{N}\ |\ a<n^{1/3}\}.$$
By Lemma \ref{gy1}. for the number of edges of  $G_{K+1}'$ the inequality
$$l_{K+1}'\leq 2|R|+|S|^2/2\leq 2(\pi(n/2)-\pi(n^{2/3}))+n^{2/3}/2$$
holds. Accordingly,
\begin{equation}\label{kK+1}
l_{K+1}\leq \pi(n)-\pi(n/2)+l_{K+1}'\leq  \pi(n)+\pi(n/2)+n^{2/3}/2.
\end{equation}

Adding up the inequalities  \eqref{k0}, \eqref{Ghbecs1}, \eqref{Ghbecs2}, \eqref{kK+1}:
\begin{multline*}
l=\sum_{h=0}^{K+1} l_h\leq n^{2/3}+ 2c_{8} n^{\frac{2}{3}}(\log n)^{2^{1/3}-1/3}(\log\log n)^2+2c_9n^{2/3}(\log\log n)^2+\\
\pi(n)+\pi(n/2)+n^{2/3}/2\leq \pi(n)+\pi(n/2)+c_{10} n^{\frac{2}{3}}(\log n)^{2^{1/3}-1/3}(\log\log n)^2,
\end{multline*}
for $c_{10}=2c_9+1$ if $n$ is large enough.
Consequently,  the statement of the theorem is proved.

\section{Conclusion}
Note that Theorem~\ref{3raujtetel}. implies that for every odd $k\geq 3$ we also have an analogous upper bound for $G_k(n)$:

\begin{corollary}
Let $3\leq k$ be odd. Then $G_k(n)$ is at most $\pi(n)+\pi(n/2)+n^{\frac{2}{3}}(\log n)^{2^{1/3}-1/3+o(1)}$.


\end{corollary}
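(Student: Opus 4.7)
The case $k=3$ is precisely Theorem~\ref{3raujtetel}, so fix odd $k=2j+1$ with $j\geq 2$ and suppose $A\subseteq\{1,2,\dots,n\}$ is a multiplicative $k$-Sidon set. The plan is to split on whether $A$ contains a 3-Sidon violation, and in the second case to extend that violation to a forbidden $k$-Sidon configuration by gluing on sufficiently many pairwise disjoint ordinary multiplicative Sidon violations.

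Concretely, assume there exist six distinct elements $V_0=\{s_1,s_2,s_3,t_1,t_2,t_3\}\subseteq A$ with $s_1s_2s_3=t_1t_2t_3$, and assume further that $A\setminus V_0$ contains $j-1$ pairwise disjoint quadruples $\{a_i,b_i,c_i,d_i\}$, $i=1,\dots,j-1$, each satisfying $a_ib_i=c_id_i$. Then
$$s_1s_2s_3\prod_{i=1}^{j-1}a_ib_i=t_1t_2t_3\prod_{i=1}^{j-1}c_id_i$$
is an equality of two products of $k$ elements of $A$ in which all $6+4(j-1)=2k$ factors are distinct, contradicting the $k$-Sidon property. Hence, whenever a 3-Sidon violation $V_0\subseteq A$ exists, the $4$-uniform hypergraph on $A\setminus V_0$ whose hyperedges are the multiplicative Sidon violations has matching number at most $j-2$. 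The union of the vertices of a maximum matching is then a vertex cover of size at most $4(j-2)$, and removing it together with $V_0$ leaves an ordinary multiplicative Sidon set of size at least $|A|-4j+2$, so $|A|\leq G_2(n)+4j-2$.

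Combining this with the case where $A$ is itself 3-Sidon gives
$$|A|\leq\max\bigl(G_3(n),\,G_2(n)+4j-2\bigr).$$
Theorem~\ref{3raujtetel} bounds the first term by $\pi(n)+\pi(n/2)+n^{2/3}(\log n)^{2^{1/3}-1/3+o(1)}$, while Erd\H os's bound $G_2(n)\leq\pi(n)+c_2n^{3/4}/(\log n)^{3/2}$ recalled in the introduction bounds the second by $\pi(n)+c_2n^{3/4}/(\log n)^{3/2}+O_k(1)$, and since $n^{3/4}/(\log n)^{3/2}=o(n/\log n)=o(\pi(n/2))$ this is absorbed into the same target expression for $n$ large. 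I do not foresee a serious obstacle beyond verifying the distinctness of all $2k$ elements appearing in the combined equality, which is exactly what the pairwise disjointness built into the hypothesis guarantees.
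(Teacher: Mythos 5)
Your proof is correct, and it is in fact more self-contained than what the paper offers: the paper disposes of this corollary by citing the proof of Corollary 5.1 in \cite{pach}, and the reduction you give --- gluing one $3$-Sidon violation to $j-1$ pairwise disjoint $2$-Sidon violations to manufacture a $(2j+1)$-Sidon violation in $2k$ distinct elements, hence $|A|\leq\max\bigl(G_3(n),\,G_2(n)+4j-2\bigr)$ --- is exactly the kind of argument that reference carries out. One point worth stating explicitly: after deleting $V_0$ and the vertices of a maximum matching in the $4$-uniform hypergraph, the surviving set need not be a multiplicative Sidon set in the strict ``$xy=l$ has at most one solution'' sense, since degenerate solutions $a^2=cd$ (only three distinct elements) are not covered by your $4$-uniform hyperedges. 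This is harmless here because the paper's $G_2(n)$ is defined via solutions in four \emph{distinct} elements, so the surviving set does have size at most $G_2(n)$, and Erd\H{o}s's bound $G_2(n)\leq\pi(n)+c_2n^{3/4}/(\log n)^{3/2}$ together with $n^{3/4}/(\log n)^{3/2}=o(\pi(n/2))$ absorbs this case as you indicate; but the phrase ``ordinary multiplicative Sidon set'' should be replaced by ``multiplicative $2$-Sidon set'' to match the quantity $G_2(n)$ you actually invoke.
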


\begin{proof}
The proof of Corollary 5.1. in \cite{pach} shows how an upper bound for $G_3(n)$ extends to an upper bound for any $G_k(n)$ with odd $k$. (Note that for even values of $k$ even better upper bounds can be proved as for even $k$ we have $G_k(n)\sim \pi(n)$.)
\end{proof}

According to the lower bound in \eqref{regi3ra} and the upper bound in Theorem~\ref{3raujtetel}. we have
$$\pi(n)+\pi(n/2)+c_1n^{2/3}/(\log n)^{4/3}\leq G_3(n)\leq \pi(n)+\pi(n/2)+n^{2/3}(\log n)^{2^{1/3}-1/3+o(1)}.$$
Hence, the ratio of the error terms is $(\log n)^{2^{1/3}+1+o(1)}$. One of the reasons for this gap is  that the matching lower bound of Lemma~\ref{exC6uv}. is not known. In fact with the help of this the lower bound for $G_3(n)$ could be immediately improved to $\pi(n)+\pi(n/2)+cn^{2/3}/(\log n)^{1/3}$.
To determine the precise exponent of $\log n$ 
both the graph theoretic tools and the number theoretic factorization lemma should be improved.

Another interesting question for further research is to determine the exponent of $n$ in $G_k(n)-\pi(n)-\pi(n/2)$ for odd $k>3$, or at least to decide whether this exponent is still $2/3$ (as for $k=3$) or smaller.

\vfill

\bigskip

\end{document}